\newtheorem{lemma}{Lemma}[section]
\newtheorem{teo}[lemma]{Theorem}
\newtheorem{defz}[lemma]{Definition}
\newtheorem{prop}[lemma]{Proposition}
\title[Rational Picard group of moduli of higher spin curves]{ON THE RATIONAL PICARD GROUP OF THE MODULI SPACE OF HIGHER SPIN CURVES}
\begin{document}

\maketitle

\begin{center}
  \textsc{Letizia Pernigotti} 
 \end{center}

\vspace{1cm}
\begin{center}\parbox{0.8\textwidth}{
\begin{small}\textsc{Abstract} -- We refine the notion of higher spin curves defined in terms of line bundles, by adding  
the additional structure of coherent nets of roots firstly introduced by Jarvis in terms of torsion-free sheaves. Next, we describe the boundary
part of their moduli space in the case without marked points. Finally, we provide a presentation of the rational Picard group of this space.

\vspace{0,4cm}

\textsc{Key words}: Higher spin curves; Moduli of curves; Picard group. 

\vspace{0.2cm}

\textsc{2010 MR Subject Classification}: 14H10, 14C22, 14C20. \end{small}
}\end{center}

%


\vspace{1cm}

\section{Introduction}

In \cite{Jarvis2001}, the moduli space of higher spin curves with the additional data of \emph{coherent net of roots} is described 
using torsion-free sheaves.
The aim of this note is to refine the notion of root given
in the line bundles setting by \cite{Caporaso2007} with such additional data and investigate the geometry of the corresponding moduli space. 
This space is less singular than the one described in \cite{Caporaso2007} and it is thus possible to study its Picard group.

A stable $r$-spin curve of type $\mathbf{m}$ is an $n$-pointed curve of genus $g$ with only ordinary nodes as 
singularities together with the data of a coherent net of $r$-th roots of type
$\mathbf{m}$ on it.
The set $\overline{S}^{1/r,\mathbf{m}}_{g,n}$ of stable $r$-spin curves up to isomorphisms is in one-to-one correspondence with the moduli space
$\overline{\mathcal{S}}_{g,n}^{1/r,\mathbf{m}}$ of higher spin curves
described by \cite{Jarvis2001}, hence we can transport the scheme structure of the 
latter to the 
former and consider the two spaces as isomorphic schemes.

\begin{prop}\label{propfirst}
There is a scheme with underlying set $\overline{S}^{1/r,\mathbf{m}}_{g,n}$ (which we will still 
call $\overline{S}^{1/r,\mathbf{m}}_{g,n}$)
isomorphic to the scheme $\overline{\mathcal{S}}^{1/r,\mathbf{m}}_{g,n}$ introduced by Jarvis in 
\cite{Jarvis2001}. In particular, $\overline{S}^{1/r,\mathbf{m}}_{g,n}$ is normal and projective.
\end{prop}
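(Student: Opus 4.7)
The plan is to prove Proposition \ref{propfirst} by exhibiting a natural set-theoretic bijection between $\overline{S}^{1/r,\mathbf{m}}_{g,n}$ and the set of closed points of Jarvis's moduli space $\overline{\mathcal{S}}^{1/r,\mathbf{m}}_{g,n}$, and then declaring the scheme structure on the former to be the one pulled back along this bijection from the latter. Normality and projectivity then follow immediately from the corresponding results in \cite{Jarvis2001}.

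First I would unpack the two parallel descriptions. A point of $\overline{S}^{1/r,\mathbf{m}}_{g,n}$ consists of a stable $n$-pointed curve $(C,p_1,\ldots,p_n)$ together with a quasi-stable model $\pi\colon X \to C$ obtained by blowing up a subset of the nodes of $C$ and inserting rational components, a line bundle $L$ on $X$, and a coherent net of $r$-th roots of type $\mathbf{m}$ refining the line bundle data. A point of $\overline{\mathcal{S}}^{1/r,\mathbf{m}}_{g,n}$, instead, consists of the same stable pointed curve together with a rank-one torsion-free sheaf $\mathcal{F}$ on $C$ and a compatible coherent net of roots in Jarvis's torsion-free-sheaf sense.

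Next I would construct the correspondence. In one direction, send $(X, L)$ to $\pi_* L$, which is rank-one torsion-free on $C$ and fails to be locally free exactly at the nodes that were blown up in $X$; the pushforward of the net data on $X$ yields a net in Jarvis's sense on $C$. In the opposite direction, given $\mathcal{F}$ on $C$, let $X$ be the curve obtained by blowing up $C$ at the nodes where $\mathcal{F}$ is not locally free; the quotient $\pi^*\mathcal{F}/\mathrm{tors}$ is then a line bundle on $X$ whose pushforward is $\mathcal{F}$, and Jarvis's compatibilities recover the coherent net of $r$-th roots on $X$. Checking that these two assignments are mutually inverse modulo isomorphism, and that the net data transports in both directions, reduces to a local calculation at each node of $C$.

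The main obstacle I expect is precisely this compatibility verification: one must check that the coherent net conditions imposed on line bundles on the quasi-stable model $X$ (indexed by divisors $d \mid r$) are equivalent, component by component, to Jarvis's local-at-each-node conditions on the torsion-free sheaves and their tensor powers. Once this matching is established, the identification with Jarvis's scheme-theoretic moduli is immediate, and the assertions that $\overline{S}^{1/r,\mathbf{m}}_{g,n}$ is normal and projective follow directly from the corresponding statements in \cite{Jarvis2001}.
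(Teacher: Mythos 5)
Your proposal follows essentially the same route as the paper: the paper's proof also establishes a set-theoretic bijection between the two spaces (via the torsion-free-sheaf/line-bundle-on-blow-up correspondence of Caporaso--Casagrande--Cornalba, which is exactly the $\pi_*L \leftrightarrow \pi^*\mathcal{F}/\mathrm{tors}$ dictionary you describe, worked out node-by-node in the preceding subsections), transports the scheme structure across it, and quotes Jarvis for normality and projectivity. The only difference is presentational: the paper declares the bijection to hold ``by construction'' and defers the local compatibility checks to its Section 2, whereas you flag them explicitly as the main verification step.
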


In order to describe the rational Picard group of this moduli space, we focus on the case without marked points and
we restrict to one irreducible component whenever $S^{1/r}_g:=S^{1/r,\mathbf{0}}_{g,0}$ is not irreducible. 
The low dimensional homology and cohomology of the open part $S_{g}^{1/r}$ has been studied in \cite{Randal-Williams2012},
together with the relations between boundary divisors and other
classes, such as the classes $\lambda$ and $\mu$. It turns out that, when the genus $g$ is greater or equal to $9$, the first holomology 
group over $\mathbb{Q}$ is zero, while the second cohomology group has rank $1$. This means that, for instance, the Hodge class
generates the Picard group of $S_{g}^{1/r}$ over $\mathbb{Q}$.

By \cite{Jarvis2001}, the classes
$\lambda$, $\{\alpha_i^{(a,b)}\}$ and $\{\gamma_{j,\eta}^{(a,b)}\}$, where the $\alpha_i^{(a,b)}$'s and $\gamma_{j,\eta}^{(a,b)}$'s denote suitable boundary
divisors (to be defined in Section 3), 
are independent in $\mathrm{Pic}(\overline{S}^{1/r}_g)$. By \cite{Jarvis2000} and \cite{Jarvis2001} the space 
$\overline{S}^{1/r}_g$ is normal and with quotient singularities, 
so that, as in \cite{BiniGilberto;Fontanari2006}, its rational Picard group is isomorphic to the correspondent Chow group over $\mathbb{Q}$. This 
implies that the whole Picard group $\mathrm{Pic}(\overline{S}^{1/r}_g)$ is generated over $\mathbb{Q}$ by the generators of the Chow group of the open part $S^{1/r}_g$ together
with the set of boundary classes of $\overline{S}^{1/r}_g$. 
In the end we obtain a complete description of the generators of the rational Picard group.

\begin{teo}
 Assume $g\geq 9$. Then $\mathrm{Pic}(\overline{S}^{1/r}_g)$ is freely generated over $\mathbb{Q}$ by the classes $\lambda$, $\{\alpha_i^{(a,b)}\}$ and 
 $\{\gamma_{j,\eta}^{(a,b)}\}$, where $\lambda$ is the Hodge class and $\{\alpha_i^{(a,b)}\}$ and 
 $\{\gamma_{j,\eta}^{(a,b)}\}$ are the boundary divisors.
\end{teo}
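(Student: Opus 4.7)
The plan is to pass from the Picard group to the rational Chow group and use excision to separate the boundary from the interior. By Proposition~\ref{propfirst} together with \cite{Jarvis2000, Jarvis2001}, the space $\overline{S}^{1/r}_g$ is normal with only quotient singularities, so every Weil divisor is $\mathbb{Q}$-Cartier and a standard argument (as in \cite{BiniGilberto;Fontanari2006}) yields
$$\mathrm{Pic}(\overline{S}^{1/r}_g)\otimes \mathbb{Q}\;\cong\; A^1(\overline{S}^{1/r}_g)_{\mathbb{Q}}.$$
Working on the Chow side is what allows us to exploit the boundary description from Section~3 directly in terms of Weil divisor classes.

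The next step is the excision exact sequence
$$\bigoplus_{D} \mathbb{Q}\cdot [D] \longrightarrow A^1(\overline{S}^{1/r}_g)_{\mathbb{Q}} \longrightarrow A^1(S^{1/r}_g)_{\mathbb{Q}} \longrightarrow 0,$$
where $D$ ranges over the irreducible components of the boundary $\overline{S}^{1/r}_g \smallsetminus S^{1/r}_g$. By the classification of boundary strata carried out in Section~3, these components are precisely the divisors $\alpha_i^{(a,b)}$ and $\gamma_{j,\eta}^{(a,b)}$, so the image of the leftmost arrow is the $\mathbb{Q}$-span of those classes. Generation of $\mathrm{Pic}(\overline{S}^{1/r}_g)_{\mathbb{Q}}$ therefore reduces to generation of $A^1(S^{1/r}_g)_{\mathbb{Q}}$.

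For the open part I would invoke \cite{Randal-Williams2012}: when $g \geq 9$, $H^2(S^{1/r}_g;\mathbb{Q})$ has rank one. Since $S^{1/r}_g$ has at worst quotient singularities, the cycle map $A^1(S^{1/r}_g)_{\mathbb{Q}} \to H^2(S^{1/r}_g;\mathbb{Q})$ is well-defined, and the fact that the Hodge class $\lambda$ has nonzero image forces $A^1(S^{1/r}_g)_\mathbb{Q}$ to be one-dimensional and generated by $\lambda$. Combining this with the excision sequence above shows that $\lambda$, the $\alpha_i^{(a,b)}$ and the $\gamma_{j,\eta}^{(a,b)}$ together generate $\mathrm{Pic}(\overline{S}^{1/r}_g)_\mathbb{Q}$. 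The linear independence of exactly these classes is already recorded in \cite{Jarvis2001}, so generation plus independence gives the freeness asserted.

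The main obstacle is bookkeeping at the excision step: one must confirm that the list of boundary divisors produced in Section~3 is exhaustive, and that distinct indices $(i,a,b)$, respectively $(j,\eta,a,b)$, label genuinely distinct irreducible components. Without such a precise enumeration, the excision sequence does not pin down the span of the boundary and the generation claim would be incomplete. Once this combinatorial verification is in place, the rest of the proof is a formal assembly of the three ingredients above.
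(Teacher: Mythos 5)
Your proposal follows essentially the same route as the paper: quotient singularities give $\mathrm{Pic}\otimes\mathbb{Q}\cong A_{3g-4}\otimes\mathbb{Q}$, the excision sequence isolates the boundary, Randal-Williams' rank-one computation of $H^2$ handles the open part, and independence is quoted from Jarvis. The only points the paper adds are that one must work componentwise (restricting to a fixed Arf-invariant component $S^{1/r}_g[\epsilon]$ when $r$ is even) and that the exhaustiveness and distinctness of the boundary divisors $\alpha_i^{(a,b)}$, $\gamma_{j,\eta}^{(a,b)}$ --- the bookkeeping you flag as the main obstacle --- is taken directly from Jarvis' classification in \cite{Jarvis2001}.
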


We work over the complex field $\mathbb{C}$.

\section{The moduli space}\label{sect:basic}
 
 \subsection{Coherent nets}\label{subsect1}
 Le $C$ be a stable curve of genus $g$. Let us start by recalling the two basic definitions of $r$-th root of a line bundle and of a torsion-free sheaf.
\begin{defz}[\cite{Jarvis2001}, $r$-th root of a torsion-free sheaf]
 Given a semistable curve $C$ and a rank-one torsion-free sheaf $\mathcal{K}$ on $C$, an \emph{$r$-th root of $\mathcal{K}$} is a
 pair $(\mathcal{E},b)$ of a rank-one torsion-free sheaf $\mathcal{E}$ and an $\mathcal{O}_C$-module homomorphism
 $b:\mathcal{E}^{\otimes r}\to\mathcal{K}$ such that
 \begin{enumerate}
  \item $r\cdot \deg \mathcal{E}=\deg\mathcal{K}$,
  \item $b$ is an isomorphism on the locus of $C$ where $\mathcal{E}$ is not singular,
  \item for every $p\in C$ where $\mathcal{E}$ is singular, it is $\mathrm{length}_p(\mathrm{coker}(b))=r-1$.
 \end{enumerate}
\end{defz}

\begin{defz}[\cite{Caporaso2007}, limit $r$-th root of a line bundle]
 Given a stable curve $C$, a line bundle $K\in\mathrm{Pic}(C)$ and an integer $r$ dividing $\deg(K)$, a \emph{limit $r$-th root} 
 of $(C,K)$ is a triple $(X,L,\alpha)$ where
 $\pi:X\to C$ is a blow-up of $C$ at a set of nodes $\Delta$, $L\in\mathrm{Pic}(X)$ and $\alpha$ is a
 homomorphism $L^{\otimes r}\to\pi^*K$ satisfying
 \begin{enumerate}
  \item the restriction of $L$ to every exceptional component has degree one,
  \item the map $\alpha$ is an isomorphism outside the exceptional components,
  \item for every exceptional components $E_i$ of $X$, the orders of vanishing of $\alpha$ at $p_i^+,p^-_i\in E_i$ add up to $r$,
  where $\{p^+_i,p_i^-\}=\pi^{-1}(p)$.
 \end{enumerate}
\end{defz}

Since the moduli space of stable curves with $r$-th roots of a fixed sheaf is not smooth when $r$ is not prime, Jarvis rigidified
the space through the additional structure of \emph{coherent net of roots} (see \cite{Jarvis2001}). We aim to translate this construction
in terms of line bundles on semi-stable curves.

\begin{defz}[\cite{Jarvis2001}, coherent net of roots]
 Given a semistable curve $C$ and a rank-one torsion-free sheaf $\mathcal{K}$ on $C$, a \emph{coherent net of roots} for $\mathcal{K}$
 is a collection $\{\mathcal{E}_d,c_{d,d'}\}$ consisting of a rank-one torsion-free sheaf $\mathcal{E}_d$ for every $d$ dividing $r$ and a
 homomorphism $c_{d,d'}:\mathcal{E}_d^{\otimes d/d'}\to \mathcal{E}_{d'}$ for each $d'$ dividing $d$ such that
 \begin{itemize}
  \item $\mathcal{E}_1=\mathcal{K}$ and $c_{d,d}=\mathrm{Id}$ for each $d$ dividing $r$,
  \item for every $d'|d|r$ the pair $(\mathcal{E}_d,c_{d,d'})$ is a $d/d'$-th root of $\mathcal{E}_{d'}$ in such a way that all the maps
  are compatible. In other words it is $c_{d',d''}\circ c_{d,d'}^{\otimes d'/d''}=c_{d,d''}$ for every $d''|d'|d|r$.
 \end{itemize}
\end{defz}

If $r$ is prime, then this costruction reduces to a simply $r$-th root of $\mathcal{K}$. Moreover, if for some $d$ the sheaf $\mathcal{E}_d$ is locally free, then 
it determines up
to isomorphism all pairs $(\mathcal{E}_{d'},c_{d,d'})$ such that $d'|d$ by the relations $\mathcal{E}_{d'}=\mathcal{E}_d^{\otimes d/d'}$, $c_{d',1}=c_{d,1}$ and $c_{d,d'}=\mathrm{Id}$.
This means that the whole net is not already encoded in the ``top'' root $(\mathcal{E}_r,c_{r,1})$ only when, for some $d<r$, at a node $p$ the sheaf $\mathcal{E}_d$ is locally free but $\mathcal{E}_r$ is not. In this
case the net corresponds to the choice of a locally free $d$-th root $\mathcal{E}_d$ of $\mathcal{K}$ and a non-locally-free $(r/d)$-th root of $\mathcal{E}_d$ (\cite[\S 2.3.2]{Jarvis2000}). 

Let us go more into details.
Consider a stable curve $C$ and a coherent net of roots $\{\mathcal{E}_d,c_{d,d'}\}$ of $\omega_C$. Let $(\mathcal{E}_r,c_{r,1})$ be the top root and let $\Delta$ be
the set of points of $C$ where $\mathcal{E}_r$ is singular. Let us call $\nu: Y\to C$ the partial normalization of $C$ at $\Delta$.
For simplicity suppose $\Delta=\{p\}$ and let $\{u,v\}$ be the order of the $r$-th root $(\mathcal{E}_r,c_{r,1})$ at $p$. This 
means (see \cite[\S 2.2.2]{Jarvis2001}) that
\[
\nu^\natural\mathcal{E}_r^{\otimes r}\stackrel{\tilde{c}_{r,1}}{\cong}\nu^*\omega_C(-up^+-vp^-)=\omega_Y(-(u-1)p^+-(v-1)p^-),
\]
where $\nu^\natural$ is defined as $\nu^\natural \mathcal{F}:=\nu^*\mathcal{F}/tors$ for every sheaf $\mathcal{F}$, the map $\widetilde{c}_{r,1}$ is the 
isomorphism induced by the map $c_{r,1}$ at the level of the partial normalization $Y$ and the points $\{p^+,p^-\}$ are the preimages of the point $p$ under
the map $\nu$. Moreover, for every $d|r$, the order of the $d$-th root $(\mathcal{E}_d,c_{d,1})$ at $p$ is given by $\{u_d,v_d\}$, where $u_d$ and $v_d$
are the least non-negative integers congruent respectively to $u$ and $v$ modulo $d$ (see 	\cite{Jarvis2001}). From this it is clear that $\mathcal{E}_d$ is smooth at $p$ if and only if
$d$ divides $u$ (and $v$). We can thus distinguish between the case in which $u$ and $v$ are relatively prime and the case in which they are not. Let us define
\[
\nu^\natural\mathcal{E}_d:=\nu^\natural\mathcal{E}_r^{\otimes r/d}\otimes \mathcal{O}_Y\left(\frac{u-u_d}{d}p^++\frac{v-v_d}{d}p^-\right).
\]

If $\mathrm{gcd}(u,v)=1$ then no $c_{d,1}$ is an isomorphism, all $\mathcal{E}_d$'s are singular at $p$ and furthermore they are completely determined by the top root by the relations
$\mathcal{E}_d=\nu_*\nu^\natural\mathcal{E}_d$ and $c_{d,1}=c_{r,1}$ (see \cite[\S 2.2.2]{Jarvis2001}).

If instead $\mathrm{gcd}(u,v)=l>1$ then the sheaf $\mathcal{E}_l$ is smooth at $p$ and it is necessary an additional gluing datum to construct $\mathcal{E}_d$ from $\nu^\natural\mathcal{E}_l$.
In particular, the sheaf $\mathcal{E}'_l:=\nu^\natural\mathcal{E}_l$ is a smooth (that is, of order $\{0,0\}$) $l$-th root of $\omega_Y(p^++p^-)$ and thus the $l$-th root $\mathcal{E}_l$ is
recovered once we choose an isomorphism $\phi: {\mathcal{E}'_l}_{|p^+}\to {\mathcal{E}'_l}_{|p^+}$ compatible with the maps
\[
{\mathcal{E}_l^{'\otimes l}}_{|p^+} \stackrel{\sim}{\longrightarrow} \omega_Y(p^++p^-)\stackrel{\sim}{\longleftarrow} {\mathcal{E}_l^{'\otimes l}}_{|p^-}.
\]
In other words, the gluing datum corresponds to a (non-canonically determined) $l$-th root of unity and hence there are exactly $l$ distinct possibilities for the gluing. We will
see later that in some cases different gluings correspond to automorphisms of the underlying curve while in other cases different gluings produce different spin curves.

\subsection{Torsion-free sheaves and line bundles}\label{subsect2}
We restate here the main theorem linking torsion-free sheaves on stable curves and line bundles on semi-stable curves.
\begin{prop}[\cite{Caporaso2007}, Proposition 4.2.2.]\label{teo:bundshea}
 Let $B$ an integral scheme and $f:\mathcal{C}\to B$ and $f:\mathcal{C}\to B$ a family of nodal curves.
 \begin{enumerate}
  \item Let $\pi:\mathcal{X}\to\mathcal{C}$ be a family of blow-ups of $\mathcal{C}$ and let $\mathcal{L}\in\mathrm{Pic}\mathcal{X}$ be a line bundle 
  having degree one on every exceptional component. Then $\pi_*(\mathcal{L})$ is a relatively torsion-free sheaf of rank one, flat over $B$.
  \item Conversely, suppose that $\mathcal{F}$ is a relatively torsion-free sheaf of rank one on $\mathcal{C}$, flat over $B$. Then there exist a family
   $\pi:\mathcal{X}\to \mathcal{C}$ of blow-ups of $\mathcal{C}$ an a line bundle $\mathcal{L}\in\mathrm{Pic}\mathcal{X}$ having degree one on all exceptional 
   components, such that $\mathcal{F}\cong \pi_*(\mathcal{L})$.
  \item Let $\pi:\mathcal{X}\to \mathcal{C}$ and $\pi':\mathcal{X}'\to \mathcal{C}$ be families of blow-ups of $\mathcal{C}$ and
  $\mathcal{L}\in\mathrm{Pic}\mathcal{X}$, $\mathcal{L}'\in\mathrm{Pic}\mathcal{X}'$ line bundles having degree one on every exceptional component. Then
  $ \pi_*(\mathcal{L})\cong \pi'_*(\mathcal{L}') \iff \exists \sigma:\mathcal{X}\stackrel{\sim}{\longrightarrow}\mathcal{X}' \text{ isomorphism over }
  \mathcal{C} \text{ s.t. } \mathcal{L}\cong\sigma^*(\mathcal{L}'). $
 \end{enumerate}
\end{prop}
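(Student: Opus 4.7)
The plan is to prove the three parts in order, reducing everything to a local calculation at a node and then globalizing by flatness and a universal-property argument.

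For (1), the statement is local around the exceptional components, since $\pi$ is an isomorphism elsewhere. After base change to a point of $B$, I would work in an \'etale or formal neighborhood of a node $p$ of the fiber: locally $\mathcal{C}$ looks like $\mathrm{Spec}\,k[[x,y]]/(xy)$ and a one-step blow-up at $p$ replaces the node by an exceptional $\mathbb{P}^1=E$ meeting the two branches transversally. A line bundle of degree one on $E$ is isomorphic to $\mathcal{O}_E(1)$, and an explicit computation of $\pi_{*}\mathcal{O}_X(E)$ (or, equivalently, of $R^{0}\pi_{*}$ of the twist) identifies the stalk with the maximal ideal $\mathfrak{m}_p$, which is the unique non-free rank-one torsion-free module at an $A_1$-singularity. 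This gives (fiberwise) torsion-freeness and rank one. To upgrade to relative flatness over $B$, I would invoke the flatness criterion for families of sheaves with constant fiberwise Hilbert polynomial, together with $R^{1}\pi_{*}\mathcal{L}=0$ (which follows from the degree-one hypothesis on each exceptional $\mathbb{P}^1$ and base change).

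For (2), I would first locate the singular locus $\Delta\subset \mathcal{C}$ of $\mathcal{F}$, i.e.\ the codimension-two subscheme where $\mathcal{F}$ fails to be locally free; on fibers these are exactly those nodes where $\mathcal{F}$ is isomorphic to $\mathfrak{m}_p$. Define $\pi:\mathcal{X}\to\mathcal{C}$ to be the blow-up of $\mathcal{C}$ along $\Delta$ (equivalently, $\mathcal{X}=\mathbf{Proj}_{\mathcal{C}}\mathrm{Sym}^{\bullet}\mathcal{F}$), and take $\mathcal{L}$ to be the tautological quotient $\mathcal{O}_{\mathcal{X}}(1)$. The verification that $\mathcal{L}$ has degree one on each exceptional component and that $\pi_{*}\mathcal{L}\cong\mathcal{F}$ is again local at a node and reduces to the explicit computation above, dualized: one writes $\mathfrak{m}_p$ as a quotient of a free rank-two module and checks that its $\mathrm{Proj}$ is the blow-up with the expected tautological bundle. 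Relative flatness of the family $\mathcal{X}\to B$ follows from the nodal structure being preserved fiberwise.

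For (3), the key observation is that the locus where $\pi$ fails to be an isomorphism is intrinsically determined by $\pi_{*}\mathcal{L}$, namely it is the non-locally-free locus of this sheaf. Hence if $\pi_{*}\mathcal{L}\cong \pi'_{*}\mathcal{L}'$, both $\mathcal{X}$ and $\mathcal{X}'$ are blow-ups of $\mathcal{C}$ along the same closed subscheme, so by the universal property of blow-ups there is a unique $\mathcal{C}$-isomorphism $\sigma:\mathcal{X}\stackrel{\sim}{\to}\mathcal{X}'$. The line bundles $\mathcal{L}$ and $\sigma^{*}\mathcal{L}'$ then have isomorphic pushforwards to $\mathcal{C}$ and both have degree one on the exceptional components; by the projection formula and the identification $\pi^{*}\pi_{*}\mathcal{L}/\text{torsion}\cong \mathcal{L}$ (which is where the degree-one hypothesis is crucial), this lifts to an isomorphism $\mathcal{L}\cong \sigma^{*}\mathcal{L}'$.

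The main obstacle I expect is the interaction between the local model at a node and relative flatness: one has to be careful that the blow-up $\mathcal{X}\to\mathcal{C}$ and the line bundle $\mathcal{L}$ constructed in (2) really form a flat family of nodal curves over $B$, rather than acquiring worse singularities in some fibers. The cleanest way around this is to work with the $\mathbf{Proj}$ description, since flatness of $\mathcal{F}$ over $B$ passes directly to flatness of $\mathrm{Sym}^{\bullet}\mathcal{F}$ in the relevant degrees; the remainder of the argument is then a careful but standard unwinding of Grothendieck duality and base change in the neighborhood of the nodes.
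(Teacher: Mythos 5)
This proposition is not proved in the paper at all: it is quoted verbatim, with attribution, as Proposition 4.2.2 of \cite{Caporaso2007}, and the paper's ``proof'' consists of that citation. So there is no internal argument to compare yours against; the relevant benchmark is the proof in \cite{Caporaso2007}, which, like yours, rests on the \'etale-local classification of rank-one torsion-free sheaves at a node (free or isomorphic to $\mathfrak{m}_p\cong\nu_*\mathcal{O}_Y$) and on the computation of $\pi_*$ of a line bundle of degree one on the exceptional $\mathbb{P}^1$. Your overall strategy is therefore the standard one, and packaging the converse direction as $\mathcal{X}=\mathbf{Proj}_{\mathcal{C}}\,\mathrm{Sym}^{\bullet}\mathcal{F}$ with $\mathcal{L}=\mathcal{O}_{\mathcal{X}}(1)$ is a legitimate and rather clean way to get functoriality in $\mathcal{F}$, which is exactly what part (3) needs.

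Three points in your sketch are inaccurate and worth fixing. First, ``blow-up of $\mathcal{C}$ along $\Delta$'' must be understood in the ad hoc sense of this literature (insertion of an exceptional $\mathbb{P}^1$ at each node of $\Delta$), not as the Rees-algebra blow-up: for the node $k[[x,y]]/(xy)$ the scheme $\mathbf{Proj}\bigoplus_n\mathfrak{m}^n$ is the normalization (two disjoint branches, no exceptional component), whereas $\mathbf{Proj}\,\mathrm{Sym}^{\bullet}\mathfrak{m}=\mathbf{Proj}\,A[X,Y]/(yX,xY)$ does insert the $\mathbb{P}^1$; so your parenthetical ``equivalently'' conflates two different objects, and only the $\mathrm{Sym}^{\bullet}$ version is correct. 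Second, in (3) the $\mathcal{C}$-isomorphism $\sigma$ is not unique (each exceptional component carries a $\mathbb{G}_m$ of automorphisms over $\mathcal{C}$), and there is no universal property to invoke for these non-Rees blow-ups; you only need existence, which follows from the two curves having exceptional components over the same set of nodes, namely the non-locally-free locus of the pushforward. Third, the identification $\pi^*\pi_*\mathcal{L}/\mathrm{tors}\cong\mathcal{L}$ is false as stated: the counit $\pi^*\pi_*\mathcal{L}\to\mathcal{L}$ is surjective, but its kernel has rank one along the whole exceptional component $E$, hence is not torsion in any usual sense on a reducible curve. The clean way to finish (3) is to use the $\mathbf{Proj}\,\mathrm{Sym}^{\bullet}$ description you already set up in (2), which reconstructs the pair $(\mathcal{X},\mathcal{L})$ functorially from $\pi_*\mathcal{L}$; alternatively one argues directly that $\mathcal{L}$ is determined by $\mathcal{L}|_{\mathcal{X}\setminus E}$ and the degrees on the exceptional components, because the gluing data at $p^{\pm}$ are permuted transitively by automorphisms of $(E,\mathcal{O}_E(1))$.
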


Consider a coherent net of roots $\{\mathcal{E}_d,c_{d,d'}\}$ for $(C,\omega_C)$. By Proposition \ref{teo:bundshea}, we can associate 
to every sheaf $\mathcal{E}_d$ a blow-up $\pi_d:X_d\to C$ of $C$ at the points where 
$\mathcal{E}_d$ is singular and a line bundle $L_d\in\mathrm{Pic}(X_d)$ having degree one on all exceptional components and such that
$\mathcal{E}_d\cong {\pi_d}_*(L_d)$.
Furthermore, for every $d$ dividing $r$, the map $c_{d,1}:\mathcal{E}_d^{\otimes d}\to\omega_C$ corresponds to an
homomorphism $\alpha_d:L_d^{\otimes d}\to \pi_d^*\omega_C$ that makes $(X_d,L_d,\alpha_d)$ a limit $d$-th
root of $(C,\omega_C)$ in the sense of \cite{Caporaso2007}. In particular, if $\{u_d,v_d\}$ is the order of
$(\mathcal{E}_d,c_{d,1})$ at a point $p\in C$ where $\mathcal{E}_d$ is singular, 
then it is also the order of vanishing of
$\alpha_d$ at the points $\{p^+,p^-\}=\pi_d^{-1}(p)$ of $X_d$.

Suppose that there exists two integers $d$ and $d'$ such that at a point $p\in C$ the sheaf $\mathcal{E}_d$ is singular while 
the sheaf $\mathcal{E}_{d'}$ is smooth.
\begin{figure}[h]
   \centering
   \includegraphics[width=0.75\textwidth]{./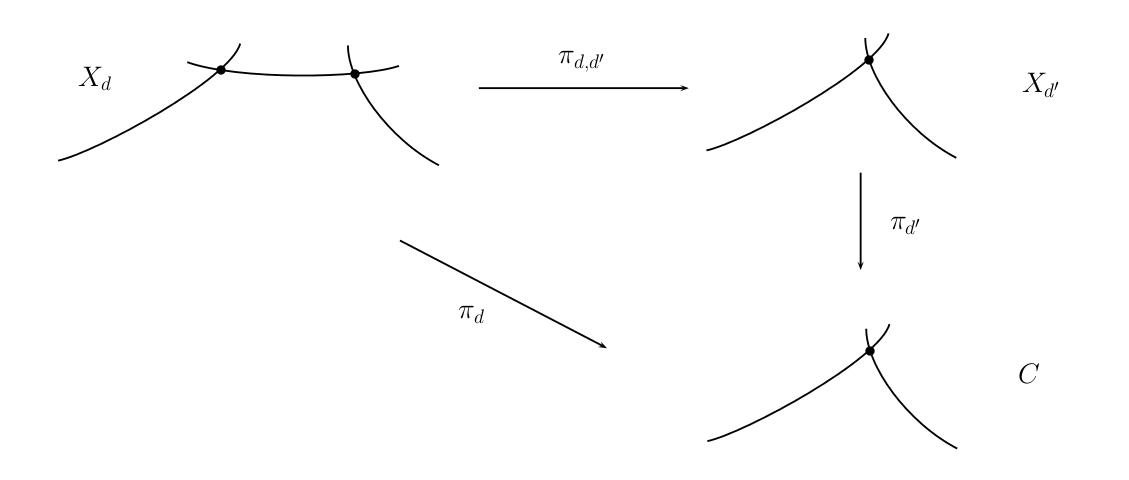}
   \caption{$u$ and $v$ not relatively prime.}
   \label{fig:nrelprim}
\end{figure}

As illustrated in Figure \ref{fig:nrelprim}, in this case $X_d\neq X_{d'}$ and we have a sequence of blow-ups
\[
\xymatrix{   X_d \ar[r]^{\pi_{d,d'}} \ar@/_2pc/[rr]_{\pi_d} & X_{d'} \ar[r]^{\pi_{d'}} & C }
\]
and two line bundles $L_d\in \mathrm{Pic}(X_d)$ and $L_{d'}\in\mathrm{Pic}(X_{d'})$ on two different bases. But if $\{p^+,p^-\}$ 
are the two points in the preimage $\pi_{d,d'}^{-1}(p)=\pi_d^{-1}(p)$, we know that the line bundle $L_{d'}$ is completely
determined by the line bundle $\widetilde{N}_{d'}:=\pi_{d,d'}^*(L_d)$ plus a gluing datum 
$\eta_{d'}^p:\widetilde{N}_{d'|p^+}\stackrel{\sim}{\to}\widetilde{N}_{d'|p^-}$.
In other words, giving a line bundle $N\in \mathrm{Pic}(X_{d'})$ is the
same (modulo isomorphisms) as giving a line bundle $\widetilde{N}\in\mathrm{Pic}(X_d)$ together with an isomorphism 
$\eta^p:{\widetilde{N}}_{|p^+}\stackrel{\sim}{\to}{\widetilde{N}}_{|p^-}$
such that $\widetilde{N}_{X_d\setminus E}=L_{X_{d'}\setminus\{p\}}$ and $\widetilde{N}_{|E}\cong \mathcal{O}_E(1)$, where $E=\pi_{d,d'}^{-1}(p)$ is the exceptional 
component coming from the blow-up.

Consider again the coherent net of roots $\{\mathcal{E}_d,c_{d,d'}\}$ and let $r$ be the order of the top root. Define $X:=X_r$ and $\pi:=\pi_r$
and let $\Delta:=\{p_1,\ldots,p_k\}\subset C$ be the set of points where $\mathcal{E}_r$ is singular. As we have seen above, to the
coherent net of roots corresponds the collection of limit roots $\{(X_d,L_d,\alpha_d)\}$ for every $d$ dividing $r$. Since we prefer
to have a fixed base space $X$ for all line bundles, we replace the collection $\{(X_d,L_d,\alpha_d)\}$ with the collection
$(X,\{N_d,\beta_d,\eta^{p_j}_d\})$ where $N_d:=\pi_{r,d}^*L_d\in\mathrm{Pic}(X)$, the map $\beta_d$ is the homomorphism $N_d^{\otimes d}\to \pi^*\omega_C$ turning
$N_d$ into a limit $d$-th root of $(C,\omega_C)$ and $\eta^{p_j}_d$ is an isomorphism ${N_d}_{|p_j^+}\stackrel{\sim}{\to} {N_d}_{|p_j^+}$ every
time that $\beta_d$ has order zero at $\{p_j^+,p_j^-\}$.

We may note that, if $N:=N_r$ is the top root and $\nu:Y\to C$ is the partial normalization at $\Delta$, then the line bundle $N$
is exactly the gluing of a degree one line bundle on each exceptional component with a line bundle $\widetilde{N}\in \mathrm{Pic}(Y)$ satisfying
\begin{equation}\label{eq:toproot}
\widetilde{N}^{\otimes r}=\nu^*\omega_C\left(-\sum_j(u^jp_j^++v^jp_j^-)\right),
\end{equation}
where $\{u^j,v^j\}$ are the order of vanishing of $\beta:=\beta_r$ at the points $p_j^+$ and $p_j^-$ (see \cite[\S 2.2]{Caporaso2007}).
Furthermore, the map $\beta$ is defined to agree with the inclusion $\nu^*\omega_C\left(-\sum_j(u^jp_j^++v^jp_j^-)\right)\hookrightarrow \nu^*\omega_C$ on
$Y$ and to be zero on the exceptional components.
Any other line bundle of the collection is then obtained by gluing the line bundle
\begin{equation}\label{eq:othroot}
\widetilde{N}_d:=\widetilde{N}^{\otimes r/d}\otimes \mathcal{O}_Y\left(\sum_j \frac{u^j-u_d^j}{d}p^++\frac{v^j-v_d^j}{d}p^-\right)
\end{equation}
on $Y$ with the degree one line bundle on the exceptional components plus the datum of a gluing $\eta_d^{p_j}$ every time is needed, that is
every time that $d$ divides $u$ (or $v$). 

We are now ready to state the main definition. 
\begin{defz}[Coherent net of roots]\label{def:netlb}
Given a stable curve $C$ and an integer $r$ dividing $2g-2$, a \emph{coherent net of roots} of order $r$ for $(C,\omega_C)$ is the datum 
$(X,\{N_d,\beta_{d,d'},\eta^{p_j}_d\}_{d|r})$ where
\begin{itemize}
 \item $\pi:X\to C$ is a blow-up of $C$ at a set of nodes $\Delta=\{p_1,\ldots,p_k\}$,
 \item $N_1=\pi^*\omega_C$ and $\beta_{d,d}=\mathrm{Id}$ for every $d$,
 \item for every $d|r$, the triple $(X,N_d,\beta_{d,1})$ is a limit $d$-th root of $(C,\omega_C)$ and the map $\beta_{d,1}$ has
 order $\{u_d^j,v_d^j\}$ at the points $p^+_j$ and $p_j^-$ for every $j\in\{1,\ldots,k\}$,
 \item for every $d'|d$, the map $\beta_{d,d'}:N_d^{\otimes d/d'}\to N_d$ is an isomorphism outside the exceptional components and has order
 $\{(u^j_d-u^j_{d'})/d', (v^j_d-v^j_{d'})/d'\}$ at the points $p^+_j$ and $p_j^-$ for every $j\in\{1,\ldots,k\}$,
 \item for every $j\in\{1,\ldots,k\}$, if $d$ divides $u$ then $\eta^{p_j}_d:{N_d}_{|p_j^+}\stackrel{\sim}{\to} {N_d}_{|p_j^-}$ is a gluing datum, otherwise it is the null map.
 \item all the maps $\beta_{d,d'}$ are compatible.
\end{itemize}
\end{defz}

As in \cite{Jarvis2001} we can define what an isomorphism of coherent net of $r$-th roots is.
\begin{defz}[Isomorphism of coherent nets]
 Given a stable curve $C$, an \emph{isomorphism of coherent nets of $r$-th roots} of $(C,\omega_C)$ 
 from $(X,\{N_d,\beta_{d,d'},\eta^{p_j}_d\}_{d|r})$ to
 $(X',\{N'_d,\beta'_{d,d'},\eta'^{p_j}_d\}_{d|r})$ is an isomorphism of curves $\tau:X\to X'$
together with a system of isomorphisms $\{\gamma_d:\tau^*N'_d\to N_d\}$ compatible with all the maps $\tau^*\beta'_{d,d'}$, $\beta_{d,d'}$,
$\tau^*\eta'^{p_j}_d$ and $\eta^{p_j}_d$
and such that $\gamma_1$ is the canonical isomorphism
$ \beta_1:\tau^*\pi'^*\omega_C\to \pi^*\omega_C. $
\end{defz}

Define the set
\begin{equation*}\label{eqsch}
\overline{S}^{1/r}_g:=\left\{\Big[C,(X,\{N_d,\beta_{d,d'},\eta^{p_j}_d\}_{d|r})\Big] \left|
\begin{array}{c}
C \text{ is a stable curve of genus }g, \\
(X,\{N_d,\beta_{d,d'},\eta^{p_j}_d\}_{d|r}) \text{ is a coherent net of} \\
r\text{-th root in the sense of Def. \ref{def:netlb}}
\end{array}\right.\right\},
\end{equation*}
where $[\cdot,\cdot]$ denotes the class of $r$-spin curves modulo isomorphisms.

\begin{prop}\label{prop1}
There is a scheme with underlying set $\overline{S}^{1/r}_g$ (which we will still call $\overline{S}^{1/r}_g$)
isomorphic to the scheme $\overline{\mathcal{S}}^{1/r}_g$ introduced by Jarvis in 
\cite{Jarvis2001}. In particular, $\overline{S}^{1/r}_g$ is normal and projective.
\end{prop}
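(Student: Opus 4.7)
The plan is to construct a bijection between the underlying set $\overline{S}^{1/r}_g$ and the underlying set of Jarvis's space $\overline{\mathcal{S}}^{1/r}_g$, and then simply transport the scheme structure across this bijection. Normality and projectivity are then inherited from \cite{Jarvis2001}, so the only real work is to show that the bijection is well defined on isomorphism classes in both directions.

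First I would define the map from $\overline{S}^{1/r}_g$ to $\overline{\mathcal{S}}^{1/r}_g$. Starting from a representative $(C,X,\{N_d,\beta_{d,d'},\eta^{p_j}_d\})$, let $\pi_d:X_d\to C$ denote the intermediate blow-up of $C$ at the subset of $\Delta$ where $N_d$ has nonzero degree on an exceptional component (equivalently, where $\mathcal{E}_d$ is forced to be singular). The gluing data $\eta^{p_j}_d$ exactly prescribe how to descend $N_d$ from $X$ to a line bundle $L_d\in\mathrm{Pic}(X_d)$, via the correspondence discussed just before Definition \ref{def:netlb}. Setting
\[
\mathcal{E}_d:=(\pi_d)_*L_d,\qquad c_{d,d'}:=(\pi_d)_*\beta_{d,d'},
\]
Proposition \ref{teo:bundshea}(1) guarantees that each $\mathcal{E}_d$ is a rank one torsion-free sheaf, and the compatibility conditions imposed in Definition \ref{def:netlb} translate, after pushforward, to the compatibility conditions in Jarvis's definition of a coherent net of roots. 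I would then check that isomorphisms of line-bundle nets push forward to isomorphisms of torsion-free-sheaf nets; this is immediate from part (3) of Proposition \ref{teo:bundshea}.

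Next I would construct the inverse. Given $(C,\{\mathcal{E}_d,c_{d,d'}\})$, apply Proposition \ref{teo:bundshea}(2) to the top root $\mathcal{E}_r$ to obtain a blow-up $\pi:X\to C$ and $L_r\in\mathrm{Pic}(X)$ with $\mathcal{E}_r\cong\pi_*L_r$; set $N_r:=L_r$ and $\Delta$ the set of nodes blown up. For $d\mid r$, apply (2) again to each $\mathcal{E}_d$ to produce $\pi_d:X_d\to C$ and $L_d\in\mathrm{Pic}(X_d)$; by the analysis of orders recalled in Section \ref{subsect1}, $X$ dominates each $X_d$ via a canonical map $\pi_{r,d}:X\to X_d$, so I can define $N_d:=\pi_{r,d}^*L_d\in\mathrm{Pic}(X)$. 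The pulled-back maps $\beta_{d,d'}$ are determined by $c_{d,d'}$, and the gluing data $\eta^{p_j}_d$ are precisely the additional information needed to recover $L_d$ from $N_d$ over each point of $\Delta$ where $L_d$ descends to a line bundle on $X_d$ but $N_d$ does not descend; the discussion leading up to \eqref{eq:othroot} shows that such gluings exist and are well defined. Part (3) of Proposition \ref{teo:bundshea}, applied level-by-level in $d$, shows that the resulting line-bundle net is well defined up to isomorphism, and that the two constructions are mutually inverse on isomorphism classes.

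The main obstacle is bookkeeping the gluing data $\eta^{p_j}_d$ consistently as $d$ varies: the subtlety is that over a node $p_j$ where $\gcd(u^j,v^j)=l>1$, all $\mathcal{E}_d$ with $d\mid l$ are locally free, so the corresponding $L_d$'s descend further than $L_r$, and one must verify that the $\eta^{p_j}_d$ one recovers on $X$ are compatible with the tower $\beta_{d,d'}$ as well as with the tower $c_{d,d'}$. Once this compatibility is checked directly from the order computations in Section \ref{subsect1}, the bijection of underlying sets is established. Transporting the scheme structure of $\overline{\mathcal{S}}^{1/r}_g$ along this bijection gives a scheme isomorphic to Jarvis's, and normality and projectivity are immediate consequences of the main results of \cite{Jarvis2000,Jarvis2001}.
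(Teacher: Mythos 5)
Your proposal is correct and follows essentially the same route as the paper: the paper's proof simply invokes the set-theoretic bijection "by construction" (referring to the correspondence between torsion-free-sheaf nets and line-bundle nets developed in Section \ref{sect:basic}), transports the scheme structure, and cites Jarvis for normality and projectivity. You have merely spelled out the details of that bijection, which the paper leaves implicit.
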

\begin{proof}
 By construction, there is a set-theoretical bijection between $\overline{S}^{1/r}_g$ and the coarse moduli space $\overline{\mathcal{S}}^{1/r}_g$
 of $\overline{\mathfrak{S}}^{1/r}_g$. Thanks to this correspondence, we can 
 transport the scheme structure of the latter to the former and consider the two spaces as isomorphic schemes. Since the latter
 is normal and projective (see \cite[Proposition 3.1.1]{Jarvis2001}), the proof is over. 
\end{proof}

\subsection{Marked points}
The whole construction can be repeated considering also curves with marked points. We state here the analogous definitions.
Let $C$ be an $n$-pointed curve of genus $g$ with only ordinary nodes as singularities,
$\mathbf{m}=(m^1,\ldots,m^n)$ an $n$-tuple of integers and $r\in \mathbb{N}$ a positive
integer dividing $2g-2-\sum m^i$. Following \cite{Caporaso2007}, we define  a \textit{$r$-th-root} as follows.

\begin{defz}[Limit $r$-th root of type $\mathbf{m}$]
 Let $(C,(q_1,\ldots,q_n))$ be an $n$-pointed curve with only ordinary nodes as singularities.
 We say that $((X,(q_1,\ldots,q_n)),L,\alpha)$ is an $r$-th root of $((C,(q_1,\ldots,q_n)),\omega_C)$ of type
$\mathrm{\mathbf{m}}$ if $(X,(h_1,\ldots,h_n))$ is a blow-up of $(C,(q_1,\ldots,q_n))$ at a
set of nodes $\Delta=\{p_1,\ldots,p_k\}$, $L$ is a line bundle on $X$ and $\alpha$ is a homomorphism 
$L^{\otimes r}\to \pi^*\omega_C(-\sum m^iq_i)$ satisfying
 \begin{enumerate}
 \item the restriction of $L$ to every exceptional component has degree one,
 \item the map $\alpha$ is an isomorphism outside the exceptional components
 \item for every exceptional component $E_j$, the order of vanishing $u^j$ and $v^j$ of $\alpha$ at 
$\{p_j^+,p_j^-\}=\pi^{-1}(p_j)$ add up to $r$,
\end{enumerate}
\end{defz}

The arguments of \S\ref{subsect1} and \S\ref{subsect2} still hold with little modifications. In particular, equation (\ref{eq:toproot}) becomes
\begin{equation}\label{eq:toproot2}
\widetilde{N}^{\otimes r}=\nu^*\omega_C\left(-\sum_j(u^jp_j^++v^jp_j^-)-\sum_i m^iq_i\right),
\end{equation}
while equation (\ref{eq:othroot}) becomes
\begin{equation}\label{eq:othroot2}
\widetilde{N}_d:=\widetilde{N}^{\otimes r/d}\otimes \mathcal{O}_Y\left(\sum_j \frac{u^j-u_d^j}{d}p^++\sum_j\frac{v^j-v_d^j}{d}p^-
+\sum_i \frac{m^i-m_d^i}{d}q_i\right).
\end{equation}
The definition of coherent net of roots is completely analogous to the one without marked points.
The only additional requirement is that each $N_d$ is now a $d$-th root of $(C,\omega_C)$ of type $\mathbf{m}_d:=(m^1_d,\ldots,m^n_d)$ and that
the order of the maps $\beta_{d,d'}$ is consequently modified.
Since the results of \cite{Jarvis2001} used in Proposition \ref{prop1} hold for $\overline{\mathfrak{S}}^{1/r,\mathbf{m}}_{g,n}$ and
$\overline{\mathcal{S}}^{1/r,\mathbf{m}}_{g,n}$, Proposition \ref{propfirst} follows.

\section{Boundary divisors}

In order to describe the boundary divisors we restrict to the case without marked points.
Since the two schemes $\overline{S}^{1/r}_{g}$ and $\overline{\mathcal{S}}^{1/r}_{g}$ are isomorphic, 
the description of the boundary in $\overline{S}^{1/r}_{g}$ is
straightforward from the one of $\overline{\mathcal{S}}^{1/r}_{g}$ made by Jarvis.

\subsection{Two irreducible components and one node}
Let $C$ be a stable curve with two irreducible components $C_1$ and $C_2$ of genus respectively $i$ and $g-i$ meeting in a double
point $p$. Let $\pi:X\to C$ be its blow-up at $p$ and $\nu:Y\to C$ the normalization. Given a coherent net of roots for $(C,\omega_C)$, let us
call the top root $(X,N_r,\beta_r)$ and let $\{u,v\}$ be the order of $\beta_{r,1}$ at $\{p^+,p^-\}$.
We have already seen that outside the exceptional component it is
\[
\widetilde{N}_r^{\otimes r}=\nu^*\omega_C(-up^+-vp^-)=\omega_{Y}(-(u-1)p^+-(v-1)p^-).
\]
We must have the degree of $\nu^*\omega_C(-up^+-vp^-)$ divisible by $r$ and this implies there exists a unique choice possible for $u$ 
(and $v=r-u$), given by
\[
u\equiv \deg\omega_{C_1}=2i-1 \ \mod r, \qquad v\equiv \deg \omega_{C_2}=2g-2i-1 \ \mod r.
\]
See for instance \cite[p. 29]{Caporaso2007}.

If $u\equiv 0$ mod $r$, then $X=C$ and $N_r\in \mathrm{Pic}(C)$ corresponds to a line bundle $N_{1,r}\in\mathrm{Pic}(C_1)$ satisfying $N^{\otimes r}_{1,r}\cong\omega_{C_1}(p^+)$
and a line bundle $N_{2,r}\in \mathrm{Pic}(C_2)$ satisfying $N^{\otimes r}_{2,r}\cong\omega_{C_2}(p^-)$ plus the datum
of a gluing datum $\eta_r^p:{N_{1,r}}_{|p^+} \stackrel{\sim}{\to}{N_{2,r}}_{|p^-}$. Any other line bundle of the net is then determined
by $N_d:=N_r^{\otimes r/d}$, $\beta_{d,d'}=\mathrm{Id}$ and $\eta_d^p:=\eta_d^{p\otimes r/d}$. However, different choices for the gluing datum 
$\eta_r^p$ correspond to automorphisms of the $r$-spin structure of $C_1$ or $C_2$ and hence induces different but isomorphic $r$-spin structure on
$C$. In other words, different gluings correspond to the same point in $\overline{S}^{1/r}_g$. See for instance \cite[\S 1.7.1]{JKV01}.

If $u\not\equiv 0$ mod $r$, we may distinguish two cases. If $\mathrm{gcd}(u,v)=1$, then there is no need of gluing datum and,
as said before, all the remaining of the net is completely determined by the top root by equation (\ref{eq:othroot}).
If $\mathrm{gcd}(u,v)=l>1$ then there is the need to specify a gluing datum $\eta_p^d:{N_d}_{|p^+}\stackrel{\sim}{\to}{N_d}_{|p^-}$ but,
as in the case $u\equiv 0$, all $d$ gluing data will yield non-canonically isomorphic $N_d$ and hence isomorphic nets of roots.

In conclusion, a spin curve with two irreducible components and one node corresponds to an element of 
$\mathfrak{S}^{1/r,u-1}_{i,1}\times\mathfrak{S}^{1/r,v-1}_{g-i,1}$ for $u$ and $v$ adding up to $r$
(see \cite[Example 1]{Jarvis2001}).

\subsection{One irreducible component and one node}
Let $C$ be a stable curve with one node $p$, let $X$ be its blow-up in $p$ and $Y$ its normalization. Given a coherent net or $r$-th
root as before, this time there are $r$ possible choice for $u$ and $v$ that allow a spin structure on $X$: or $u=v=0$, in which
case $X=C$ and the map $\beta_{r,1}$ is an isomorphism, or $u\in\{1,\ldots,r-1\}$ and $v=r-u$, in which case the map
$\beta_{r,1}$ has order exactly $\{u,v\}$ at $\{p^+,p^-\}$.

If $\mathrm{gcd}(u,v)=1$, then the spin structure on $C$ is completely determined by the $r$-th root $(X,N_r,\beta_{r,1})$ which in turn 
is induced by an $r$-th root of $\omega_Y(-(u-1)p^+-(v-1)p^-)$, where $Y$ is this time connected and irreducible. In particular,
this kind of spin curve corresponds to an element of $\mathfrak{S}^{1/r,(u-1,v-1)}_{g-1,2}$ and, furthermore, there is a morphism
$\mathfrak{S}^{1/r,(u-1,v-1)}_{g-1,2}\to \overline{\mathfrak{S}}^{1/r}_g$ whose image contains both points with 
$\widetilde{N}_r^{\otimes r}=\omega_Y(-(u-1)p^+-(v-1)p^-)$ and $\widetilde{N}_r^{\otimes r}=\omega_Y(-(v-1)p^+-(u-1)p^-)$.

If $\mathrm{gcd}(u,v)=l>1$ or $u=v=0$, then the coherent net requires the additional datum of a gluing isomorphism
$\eta_l^p$ (or $\eta_r^p$ in the second case). This time, an automorphism of the $r$-spin structure on $Y$ induces
the same automorphism on both sides of the isomorphism $\eta_l^p$ and hence it preserves the gluing. Consequently, we have
$l$ distinct gluing morphism (or $r$ distinct gluing morphims in the second case) corresponding to $l/2$ (rounding up) different points
in $\overline{S}^{1/r}_g$ belonging to $l/2$ distinct irreducible boundary divisors (see \cite[Example 2 and \S 3.2.2.]{Jarvis2001}), since
the two points in the normalization are not distinguishable.

\quad\\

Recall that, for $g>1$, the moduli spaces $\overline{S}^{1/r,\mathbf{m}}_{g,n}$ are irreducible if $r$ is odd and they are the disjoint union
of two irreducible component if $\mathrm{gcd}(r,m^1,\ldots,m^n)$ is even.
The two components corresponds respectively to $r$-spin curves with $h^0(N_2)$ even
 and $h^0(N_2)$ odd. When $g=1$, the moduli space $\overline{S}^{1/r,\mathbf{m}}_{1,n}$ is the disjoint union of $\rho$ irreducible components, where
 $\rho$ is the number of divisors of $\mathrm{gcd}(r,m^1,\ldots,m^n)$ (see \cite[\S 1.3]{JKV01} and \cite[Theorem 2.7]{Jarvis2001}).
 
We can now summarize the description of the boundary divisors of $\overline{S}^{1/r}_g$ keeping the same notation 
of \cite{Jarvis2001}. We have seen that, over a stable curve in $\delta_i\subset\overline{M}_g$, given
the order $r$ of the root, there is a unique possibility for the order of the top root. This means that over $\delta_i$
there are the irreducible divisors $\alpha_i^{(a,b)}$ corresponding to the locus of spin curves
with a spin structure of index $a$ on the genus $i$ component and of index $b$ on the genus $g-i$ component. Here the
index $a$ parametrizes the components of $S^{1/r,u-1}_{i,1}$ while the index $b$ parametrizes
the components of $S^{1/r,u-1}_{i,1}$ (see \cite[\S 3.2.2.]{Jarvis2001}).

Over $\delta_0$ there are different types of spin structures. For any choice of the order $\{u,v\}$, when
$u$ and $v$ are not relatively prime the spin structure
is determined except for the gluing. Also in this situation, for a particular choice of order $\{u,v\}$, of index $a$ of the component and of 
the gluing $\eta$, the corresponding divisor of spin curves of the given order, index and gluing is irreducible (see \cite[\S 3.2.2]{Jarvis2001}).
We denote these divisors by $\gamma_{j,\eta}^{(a)}$, where $j:=\min(u,v)$, $\eta$ is the gluing datum and $a$ is the index of the correspondent
component
of $S^{1/r,(u-1,v-1)}_{g-1,2}$. Since the two points in the normalization are not distinguishable, there is the additional
relation $\gamma_{j,\eta}^{(a)}=\gamma_{r-j,\eta^{-1}}^{(a)}$.

\begin{teo}\label{teo2}
 Assume $g\geq 9$. Then $\mathrm{Pic}(\overline{S}^{1/r}_g)$ is freely generated over $\mathbb{Q}$ by the classes $\lambda$, $\{\alpha_i^{(a,b)}\}$ and 
 $\{\gamma_{j,\eta}^{(a,b)}\}$ just described.
\end{teo}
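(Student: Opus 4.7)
The plan is to combine Randal-Williams' rank computation for the open stratum $S^{1/r}_g$ with the explicit description of the boundary divisors given in Section 3, and to convert the resulting spanning set into a basis by appealing to Jarvis' linear independence result. The structural device that glues these inputs together is the localization exact sequence in rational Chow theory.

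First I would pass from the Picard group to the Chow group. By Proposition \ref{prop1} and \cite{Jarvis2000, Jarvis2001}, the space $\overline{S}^{1/r}_g$ is normal with only finite quotient singularities, so (as recalled in the introduction, following \cite{BiniGilberto;Fontanari2006}) the cycle map yields an isomorphism $\mathrm{Pic}(\overline{S}^{1/r}_g)_{\mathbb{Q}}\cong A^1(\overline{S}^{1/r}_g)_{\mathbb{Q}}$, and analogously for $S^{1/r}_g$. This allows me to use the standard localization sequence
$$\bigoplus_{D\subset\partial\overline{S}^{1/r}_g} \mathbb{Q}\cdot[D] \;\stackrel{\iota_*}{\longrightarrow}\; A^1(\overline{S}^{1/r}_g)_{\mathbb{Q}} \;\stackrel{j^*}{\longrightarrow}\; A^1(S^{1/r}_g)_{\mathbb{Q}} \;\longrightarrow\; 0.$$
By the classification in Section 3, the irreducible boundary divisors are precisely the $\alpha_i^{(a,b)}$ (over $\delta_i$ for $i\geq 1$) and the $\gamma_{j,\eta}^{(a,b)}$ (over $\delta_0$, indexed by the order $\{u,v\}$, the gluing $\eta$ modulo the identification $\gamma_{j,\eta}^{(a,b)}=\gamma_{r-j,\eta^{-1}}^{(a,b)}$, and the component index). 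For the right-hand group I invoke \cite{Randal-Williams2012}: when $g\geq 9$ one has $H^1(S^{1/r}_g;\mathbb{Q})=0$ and $H^2(S^{1/r}_g;\mathbb{Q})$ of rank one generated by the Hodge class $\lambda$, so that $A^1(S^{1/r}_g)_{\mathbb{Q}}=\mathbb{Q}\cdot\lambda$. Taking the canonical extension of $\lambda$ to $\overline{S}^{1/r}_g$ as a lift through $j^*$, exactness of the sequence gives that the set $\{\lambda,\alpha_i^{(a,b)},\gamma_{j,\eta}^{(a,b)}\}$ spans $\mathrm{Pic}(\overline{S}^{1/r}_g)_{\mathbb{Q}}$.

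To upgrade generation to freeness, I apply the linear independence statement recalled in the introduction: by \cite{Jarvis2001}, the classes $\lambda$, $\{\alpha_i^{(a,b)}\}$ and $\{\gamma_{j,\eta}^{(a,b)}\}$ are already independent in $\mathrm{Pic}(\overline{S}^{1/r}_g)$, hence also after tensoring with $\mathbb{Q}$. Spanning plus independence yields the desired basis. The main obstacle is not analytic but bookkeeping: one must be careful that the enumeration of boundary strata coming out of $\iota_*$ (orders $\{u,v\}$, gluings $\eta$ modulo $\eta\sim\eta^{-1}$, and the component indices $(a,b)$) matches exactly Jarvis' list of independent classes, with no boundary divisor missed and no duplication introduced by the symmetry of the two preimages of the node. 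Once this matching is verified, the combination of the localization sequence and Jarvis' independence gives the theorem without further work.
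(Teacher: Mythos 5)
Your proposal is correct and follows essentially the same route as the paper: quotient singularities give $\mathrm{Pic}\otimes\mathbb{Q}\cong A_{3g-4}\otimes\mathbb{Q}$, the localization sequence reduces the problem to the open part plus boundary classes, Randal-Williams handles the open part via $H_1=0$ and $\mathrm{rk}\,H^2=1$ with $\lambda$ as generator, and Jarvis' independence result upgrades the spanning set to a basis. The only detail worth adding is the paper's explicit restriction to a single irreducible component $S^{1/r}_g[\epsilon]$ (the Arf-invariant component when $r$ is even) before invoking Randal-Williams, which your write-up leaves implicit.
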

\begin{proof}
 By \cite[Theorem 2.3]{Jarvis2000}, the space $\overline{\mathfrak{S}^{1/r}_g}$ is a smooth proper Deligne-Mumford stack over 
 $\mathbb{Z}[1/r]$ and by \cite[Theorem 2.7]{Jarvis2001} its coarse moduli space is normal and projective. 
 By \cite[Proposition 2.8]{Vistoli1989}, if a scheme of finite type over a field of characteristic zero is the
moduli space of some smooth stack, then its normalization has quotient singularities. 
Since the coarse moduli space of $\overline{\mathfrak{S}^{1/r}_g}$ is itself normal, we can conclude that it has quotient singularities and so also
$\overline{S}^{1/r}_g$ has quotient singularities. This implies that every Weil divisor is a $\mathbb{Q}$-Cartier divisor and hence there is an isomorphism
\[
\mathrm{Pic}(\overline{S}^{1/r}_g)\otimes \mathbb{Q}\cong A_{3g-4}(\overline{S}^{1/r}_g)\otimes \mathbb{Q}.
\]
In particular, as in \cite{BiniGilberto;Fontanari2006}, thanks to the exact sequence
\[
A_{3g-4}(\overline{S}^{1/r}_g\setminus S^{1/r}_g) \to A_{3g-4}(\overline{S}^{1/r}_g)\to A_{3g-4}(S^{1/r}_g)\to 0
\]
we know that the group $\mathrm{Pic}(\overline{S}^{1/r}_g)\otimes \mathbb{Q}$ is generated by the generators of
$ A_{3g-4}(S^{1/r}_g)$ together with the set of boundary classes of $\overline{S}^{1/r}_g$.

Consider first the open part and restrict to one irreducible component whenever $S^{1/r}_g$ is not irreducible. 
To do this, denote by $S^{1/r}_g[\epsilon]$ the whole $S^{1/r}_g\otimes_{\mathbb{Z}[1/r]}\mathbb{C}$ if $r$ is odd
or the component of Arf invariant $\epsilon$ if $r$ is even. 
Theorem 1.4 of \cite{Randal-Williams2012} shows that for $g\geq 6$ it is
\[
H_1(S^{1/r}_g[\epsilon];\mathbb{Q})=0
\]
and for $g\geq 9$ the second cohomology group  $H^2(S^{1/r}_g[\epsilon];\mathbb{Q})$ has rank 1 and it is thus
 generated by only one class. Since the Hodge class $\lambda$ is a non trivial class in this group,
 we can conclude that the open part $\mathrm{Pic}(S^{1/r}_g[\epsilon])\otimes \mathbb{Q}$ is generated, for instance, by $\lambda$.
 
Now we take care of the contributions coming from the boundary part. 
By \cite{Jarvis2001} the $\{\alpha_i^{(a,b)}\}$ and the 
$\{\gamma_{j,n}^{(a,b)}\}$ are 
generators for the boundary divisors of the space $\overline{S}^{1/r}_g$.
Moreover, by Proposition 3.4 of \cite{Jarvis2001}, the classes
$\lambda$, $\alpha_i^{(a,b)}$ and $\gamma_{j,n}^{(a,b)}$ are independent in 
$\mathrm{Pic}(\overline{\mathcal{S}}^{1/r}_g)$ for $g>1$, hence they are independent also in $\mathrm{Pic}(\overline{S}^{1/r}_g)$ and the claim follows.
\end{proof}

\quad\\
\begin{footnotesize}\textsc{Università degli Studi di Trento, Dipartimento di Matematica, \\ Via Sommarive 14,  I-38123 Povo (TN)}\end{footnotesize}

\noindent E-mail address: \verb=pernigotti@science.unitn.it=

 \end{document}